\numberwithin{equation}{section}
\newtheorem{thm}{\bf Theorem}[section]
\newtheorem{prop}[thm]{\bf Proposition}
\newtheorem{cor}[thm]{\bf Corollary}
\newtheorem{defn}{\bf Definition}[section]
\theoremstyle{remark}
\newtheorem{rem}{\bf Remark}[section]
\newtheorem{exmp}{\bf Example}[section]
\begin{document}
\title{A note on power instability of linear discrete-time systems  in Banach
spaces}
\author{\large{\bf Ioan-Lucian Popa}\\
Department of Mathematics, Faculty of Mathematics and Computer Science, \\
West University of Timi\c soara,  Romania\\
 e-mail: popa@math.uvt.ro}

\date{ }

\maketitle

\begin{abstract}
In this paper we investigate the power instability properties and
give necessary and sufficient conditions for the concepts of
uniform power instability, power instability  and strong power
instability for linear discrete-time system $x_{n+1}=A{(n)}x_{n}$
in Banach spaces.
\end{abstract}

{\it Keywords:} linear discrete-time systems, uniform power
instability, (strong) power instability
\section{Introduction and Preliminaries}
%
%
%
%
%

\hspace{1cm}The qualitative theory of difference equations is in a
process of
 continuous development in the past decades (see, e.g.
 \cite{halanay}, \cite{agarwal}, \cite{Elaydi1}, \cite{mil} and the references therein).
An important result in the theory of linear discrete-time systems
have been proved by R.K. Przyluski and S. Rolewicz in
\cite{przyluski} for the concept of uniform power stability.
In our previous paper \cite{popa} we obtained characterizations
for uniform an nonuniform exponential stability concepts for
linear discrete-time systems. Other results, concerning nonuniform
exponential stability concepts have been studied by L. Barreira
and C. Valls in \cite{bareira2}. Diverse and important concepts of
instability have been introduced and studied (see \cite{Megan2}
for semigroups of operators, \cite{Megan3}, \cite{tomescu} for
evolution operators, \cite{Megan1} for linear skew-product flows).

\hspace{1cm}In this paper we present the concept  of uniform power
instability and two nonuniform concepts (power instability and
strong power instability) for linear discrete-time systems. Our
main objectives are to establish relations between these concepts
and to offer generalizations of R.K. Przyluski type theorem for
these concepts.

\hspace{1cm} Let us first introduce the notation used in this
note. Let $X$ be a real or complex Banach space and
$\mathcal{B}(X)$ the Banach algebra of all bounded and linear
operators from $X$ into itself. The norm on $X$ and in
$\mathcal{B}(X)$  will be denoted by $\parallel.\parallel.$  The
set of all positive integers will be denoted by $\mathbb{N},$
$\Delta$ denotes the set of all pairs $(m,n)$ of positive integers
satisfying the inequality $m\geq n.$ We also denote by $T$ the set
of all triplets $(m,n,p)$ of positive integers with $(m,n)$ and
$(n,p)\in\Delta.$

We consider the linear discrete-time system:

\begin{equation*}\tag{$\mathfrak{A}$}\label{A}
x_{n+1}=A{(n)}x_{n},
\end{equation*}
where $A:\mathbb{N}\rightarrow \mathcal{B}(X)$ is a given
$\mathcal{B}(X)-$valued sequence. For $(m,n)\in\Delta$ we denote:
\begin{equation}\label{eqAmn}
\mathcal{A}_{m}^{n}=\left\{\begin{array}{ll}
A{(m)}\cdot \ldots\cdot  A{(n+1)},\;\; \;\; m > n\\
\qquad\quad I\qquad\qquad\quad,\;\;\;\; m=n.
\end{array}\right.
\end{equation}

\begin{defn}\label{def1}
The linear discrete-time system  (\ref{A}) is said to be
 {\it uniformly power instable} (and denote u.p.is.) if there are some
 constants $N\geq 1$ and $r\in (0,1)$ such that:
 \begin{equation*}\label{ueis}
 \parallel \mathcal{A}_{n}^{p}x\parallel \leq N r^{m-n}\parallel
 \mathcal{A}_{m}^{p} x\parallel,\;\; \text{for all}\;\; (m,n,p,x)\in T\times X.
 \end{equation*}
\end{defn}
\begin{defn}\label{def2}
The linear discrete-time system (\ref{A}) is said to be:
 \begin{description}

\item[{\it{i)}}] {\it  power instable} (and denote p.is.) if there
are some constants $N \geq 1,$ $r\in(0,1)$ and $s\geq 1$ such
that:
\begin{equation}\label{eis}
\parallel \mathcal{A}_{n}^{p}x\parallel \leq N r^{m-n}s^{ n}
\parallel \mathcal{A}_{m}^{p}x\parallel,\;\;  \text{for all}\;\; (m,n,p,x)\in T\times X.
\end{equation}

\item[{\it{ii)}}] {\it strongly power instable} (and denote
s.p.is.) if there are some constants $N \geq 1,$ $r\in (0,1)$ and
$s\in \left[1, \dfrac{1}{r}\right)$ such that:
\begin{equation}\label{ses}
\parallel \mathcal{A}_{n}^{p}x\parallel \leq N r^{m-n}s^{n} \parallel
\mathcal{A}_{m}^{p}x\parallel,\;\;  \text{for all}\;\;
(m,n,p,x)\in T\times X.
\end{equation}
\end{description}
\end{defn}
\section{Main results}

From the previous definitions it follows that ($u.p.is
\Longrightarrow s.p.is. \Longrightarrow p.is. $) The next example
illustrate the difference between the concepts of
 (strong) power instability and  uniform power instability for
linear discrete-time system (\ref{A}). We remark that the concepts
of power instability and strong power instability are much weaker
behaviors in comparison with the classical concept of uniform
power instability. A principal motivation for weakening the
assumption is that almost all variational equations in a finite
dimensional spaces have a nonuniform exponential behavior.
\begin{exmp}
Let $X=\mathbb{R}$ be a Banach space, $c>0,$ $(a_n)_{n}\subset
(0,\infty)$  and $(A_n)_{n}\subset\mathcal{B(\mathrm{X})}$ defined
for all $n\in\mathbb{N}$ by $A_{n} =c\cdot a_{n}I ,$ where

\[
  a_{n} = \left\{
  \begin{array}{l l}
    2^{-n} & \quad \text{if $n=2k$}\\
    2^{n+1} & \quad \text{if $n=2k+1$}\\
  \end{array} \right.
\]
The following statements are true:
\begin{description}

\item[{\it{i)}}](\ref{A}) is not uniformly power instable;

\item[{\it{ii)}}](\ref{A}) is power instable if and only if $c
>1$;

\item[{\it{iii)}}] (\ref{A}) is strongly power instable if and
only if $c>e.$
\end{description}

Let $(m,n,x)\in\Delta\times X.$ According to (\ref{eqAmn}) we have
that:
\[
  \mathcal{A}_{m}^{n}x = \left\{
  \begin{array}{l l}
    c^{m-n}a_{mn}x & \quad m>n\\
    x & \quad m=n\\
  \end{array} \right. ,
\]
 where
\[
  a_{mn} = \left\{
  \begin{array}{l l}
    1 & \quad \text{if $m=2q$ and $n=2p$}\\
    2^{-n-1} & \quad \text{if $m=2q$ and $n=2p+1$}\\
    2^{m+1} & \quad \text{if $m=2q+1$ and $n=2p$}\\
    2^{m-n} & \quad \text{if $m=2q+1$ and $n=2p+1$}\\
  \end{array} \right.
\]

$(i)$ If we suppose that (\ref{A}) is u.p.is. then there exist
some constants $N \geq 1$ and $r\in(0,1)$ such that
$$\parallel
x\parallel \leq N r^{m-n} \parallel \mathcal{A}_{m}^{n}x\parallel=
N (rc)^{m-n}a_{mn}\parallel x\parallel,$$ for all
$(m,n,x)\in\Delta\times X$ which  is equivalent with
\[
  \left\{
  \begin{array}{l l}
    \left( \dfrac{1}{rc}\right)^{m-n} \leq N & \quad \text{if $m=2q$ and $n=2p$}\\
    \left( \dfrac{1}{rc}\right)^{m-n}2^{n+1} \leq N & \quad \text{if $m=2q$ and $n=2p+1$}\\
    \left( \dfrac{1}{rc}\right)^{m-n}2^{-m-1} \leq N & \quad \text{if $m=2q+1$ and $n=2p$}\\
    \left( \dfrac{1}{rc}\right)^{m-n}2^{-m+n} \leq N & \quad \text{if $m=2q+1$ and $n=2p+1$}.\\
  \end{array} \right.
\]
There are two cases that can be considered at this point. If $c\in
(0,1]$ then for all $r\in(0,1),$ $m=2q$ and $n=2p\in \mathbb{N}$
fixed we have that
\begin{equation}\label{ex1eq1}
\lim\limits_{q\rightarrow\infty} \left(
\frac{1}{rc}\right)^{2q-2p}=\infty.
\end{equation}
If $c\in (1,\infty)$, $r\in (0,1),$ $n=2p+1$ and $m=n+1$ it
follows that
\begin{equation}\label{ex1eq2}
\lim\limits_{p\rightarrow\infty} \left(
\frac{1}{rc}\right)2^{2p+2}=\infty.
\end{equation}
According to (\ref{ex1eq1}) and (\ref{ex1eq2}) we can conclude
that (\ref{A}) can not be u.p.is.

$(ii)$ If (\ref{A}) is power instable then there exist some
constants $N \geq 1,$ $r\in(0,1)$ and $s \geq 1$ such that:
$$\parallel x\parallel \leq N r^{m-n}s^{n} \parallel
\mathcal{A}_{m}^{n}x\parallel= N (rc)^{m-n}s^{n}a_{mn}\parallel
x\parallel,$$ for all $(m,n,x)\in\Delta\times X$ which  is
equivalent with
\begin{equation}\label{ex1eq3}
  \left\{
  \begin{array}{l l}
    \left( \dfrac{1}{rc}\right)^{m-n}{s}^{-n} \leq N & \quad \text{if $m=2q$ and $n=2p$}\\
    \left( \dfrac{1}{rc}\right)^{m-n}s^{-n}2^{n+1} \leq N & \quad \text{if $m=2q$ and $n=2p+1$}\\
    \left( \dfrac{1}{rc}\right)^{m-n}s^{-n}2^{-m-1} \leq N & \quad \text{if $m=2q+1$ and $n=2p$}\\
    \left( \dfrac{1}{rc}\right)^{m-n}s^{-n}2^{-m+n} \leq N & \quad \text{if $m=2q+1$ and $n=2p+1$}.\\
  \end{array} \right.
\end{equation}
In this case, for any $q\in\mathbb{N}^{*}$ we have that
$(2q,2q-1)\in\Delta,$  $N\geq \dfrac{2}{rc}
\left(\dfrac{2}{s}\right)^{2q-1},$ and obvious $s > 2.$ Since
relation (\ref{ex1eq3})  is true we have that $ 0< \dfrac{1}{rc}
\leq 1.$ If $\dfrac{1}{rc} >1,$ then for  $m=2q$ and $n=2$ we
obtain:
\begin{equation}\label{ex1eq4}
\lim\limits_{q\rightarrow\infty} \left(
\frac{1}{rc}\right)^{2q-2}\left(\frac{1}{s}\right)^{2}=\infty.
\end{equation}

Reciprocally, we suppose that $c >1.$  In this case we consider $N
\geq e,$ $r \in \left(0,\frac{1}{c}\right]$ and $s>2$ and in this
way (\ref{ex1eq3})  is verified for all $(m,n)\in\Delta.$ More
than that if $c\in (0,1]$ then for all $r\in(0,1),$ $s>1$ and
$n=2$ we obtain (\ref{ex1eq4}). Hence, for $c>1,$ (\ref{A}) is
p.is.

$(iii)$ If we suppose that (\ref{A}) is s.p.is. then there exist
some constants $N\geq 1,$ $p\in(0,1)$ and $s\in
\left[1,\dfrac{1}{r}\right)$ such that for all $(m,n)\in\Delta$
relation (\ref{ex1eq3}) to be true. In a similar way as we proved
$(ii)$ we obtain that $c\geq \dfrac{1}{r}>s\geq e.$
 If $ 0<c\leq e,$ with $\dfrac{1}{r}>s\leq e,$ then for $m=2q$ and
 $n =2$ we obtain (\ref{ex1eq4}).

Reciprocally, we consider  $c > e.$ In this case we consider $N
\geq e,$ $e>r\geq\dfrac{1}{c}$ and $s=2$ and in this way
(\ref{ex1eq3}) is verified for all $(m,n)\in\Delta.$ Hence, for
$c> e,$  (\ref{A}) is s.p.is.
\end{exmp}
\begin{thm}\label{theorem2}
The linear discrete-time system (\ref{A}) is power instable if and
only if there exist some constants $D \geq 1,$ $d
>1$ and $c \geq 1$ with $c\in [1,d)$ such that:
\begin{equation}\label{t26eqii}
\sum\limits_{k=n}^{m} d^{m-k}\parallel \mathcal{A}_{k}^{n}
x\parallel \leq D c^{m}
\parallel \mathcal{A}_{m}^{n}x\parallel,\;\; \text{for all} \;\;(m,n,x)\in \Delta \times X.
\end{equation}
\end{thm}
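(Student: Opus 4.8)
The plan is to prove the two implications directly from the definitions, the only structural ingredient being the cocycle identity $\mathcal{A}_{k}^{n}\mathcal{A}_{n}^{p}=\mathcal{A}_{k}^{p}$, valid whenever $m\ge k\ge n\ge p$, which is read off immediately from (\ref{eqAmn}). The necessity part extracts the summation inequality (\ref{t26eqii}) from (\ref{eis}) by a geometric-series estimate, while the sufficiency part recovers (\ref{eis}) from (\ref{t26eqii}) by keeping a single term of the sum after an application of the cocycle identity to a shifted initial vector.

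\emph{Necessity.} Suppose (\ref{A}) is p.is., with constants $N\ge 1$, $r\in(0,1)$ and $s\ge 1$ as in (\ref{eis}). For any integers $n\le k\le m$ the triplet $(m,k,n)$ lies in $T$, so (\ref{eis}) applied to it gives $\|\mathcal{A}_{k}^{n}x\|\le Nr^{m-k}s^{k}\|\mathcal{A}_{m}^{n}x\|$. I would multiply by $d^{m-k}$, sum over $k=n,\dots,m$, and substitute $j=m-k$, obtaining
\[
\sum_{k=n}^{m}d^{m-k}\|\mathcal{A}_{k}^{n}x\|\le N\,s^{m}\|\mathcal{A}_{m}^{n}x\|\sum_{j=0}^{m-n}\Big(\tfrac{dr}{s}\Big)^{j}.
\]
The decisive point is to choose $d\in(s,s/r)$: this interval is nonempty exactly because $r<1$, it automatically gives $d>1$ and $c:=s<d$, and it makes $dr/s<1$, so the last sum is bounded by $(1-dr/s)^{-1}$ uniformly in $m$ and $n$. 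Then (\ref{t26eqii}) holds with $c=s\in[1,d)$ and $D=Ns/(s-dr)\ge 1$.

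\emph{Sufficiency.} Assume (\ref{t26eqii}) holds with $D\ge 1$, $d>1$ and $c\in[1,d)$. Fix $(m,n,p,x)\in T\times X$ and apply (\ref{t26eqii}) with the same $m$ and $n$ but with $\mathcal{A}_{n}^{p}x\in X$ in place of $x$; using $\mathcal{A}_{k}^{n}\mathcal{A}_{n}^{p}=\mathcal{A}_{k}^{p}$ and $\mathcal{A}_{m}^{n}\mathcal{A}_{n}^{p}=\mathcal{A}_{m}^{p}$ this becomes $\sum_{k=n}^{m}d^{m-k}\|\mathcal{A}_{k}^{p}x\|\le Dc^{m}\|\mathcal{A}_{m}^{p}x\|$. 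Every summand on the left is nonnegative, so retaining only the $k=n$ term yields $d^{m-n}\|\mathcal{A}_{n}^{p}x\|\le Dc^{m}\|\mathcal{A}_{m}^{p}x\|$, that is, $\|\mathcal{A}_{n}^{p}x\|\le D\big(\tfrac{c}{d}\big)^{m-n}c^{\,n}\|\mathcal{A}_{m}^{p}x\|$. Since $1\le c<d$, setting $N:=D$, $r:=c/d\in(0,1)$ and $s:=c\ge 1$ turns this into (\ref{eis}), so (\ref{A}) is p.is.

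All the computations above are routine; the single spot that needs a moment's care is the choice of $d$ in the necessity part, where $d$ must simultaneously satisfy $d>1$, $dr<s$ (so that the geometric series converges uniformly in $m,n$) and $s<d$ (so that $c=s$ indeed lies in $[1,d)$). These three requirements are met by any $d\in(s,s/r)$, an interval that is nonempty precisely because $r\in(0,1)$, which is exactly where the hypothesis $r<1$ of power instability is used.
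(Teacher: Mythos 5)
Your proof is correct and follows essentially the same route as the paper: a geometric-series estimate for necessity and, for sufficiency, substituting $\mathcal{A}_{n}^{p}x$ and keeping the $k=n$ term to recover (\ref{eis}) with $N=D$, $r=c/d$, $s=c$. The only difference is cosmetic — you take $d\in(s,s/r)$ and $c=s$, while the paper takes $d>s/r$ (so effectively $c=dr$) — and your version is in fact slightly more careful, since you explicitly bound the geometric sum and check $c\in[1,d)$.
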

\begin{proof}
{\it Necessity.} According to Definition \ref{def2} there are some
constants $N \geq 1,$ $r\in (0,1)$ and $s \geq 1$ such that:
\begin{equation*}
\parallel \mathcal{A}_{k}^{n}x\parallel \leq N r^{m-k}s^{k}
\parallel \mathcal{A}_{m}^{n}x\parallel,\;\;\text{for all}\;\; (m,k,n,x)\in T\times X.
\end{equation*}
Then for any $d>\dfrac{s}{r}$ and all $(n,x)\in
 \mathbb{N}\times X$ we have:
\begin{align*}
\sum\limits_{k=n}^{m} d^{m-k}\parallel \mathcal{A}_{k}^{n}
x\parallel &\leq N
\parallel \mathcal{A}_{m}^{n}x\parallel \sum\limits_{k=n}^{m}
d^{m-k} r^{m-k}s^{k}=\\
&=N (dr)^{m}\parallel \mathcal{A}_{m}^{n}x\parallel
\sum\limits_{k=n}^{m} \left(\dfrac{s}{dr}\right)^{k}\\
&=N (dr)^{m}\parallel \mathcal{A}_{m}^{n}x\parallel.
\end{align*}

{\it Sufficiency.} Using inequality (\ref{t26eqii}) for all
$(m,n,x)\in \Delta\times X$ we have that:
$$d^{m-n}\parallel x\parallel \leq \sum\limits_{k=n}^{m} d^{m-k}\parallel \mathcal{A}_{k}^{n}x\parallel \leq D c^{m}\parallel \mathcal{A}_{m}^{n}x\parallel.$$
 Hence,
 $$\parallel x\parallel \leq D c^{n}
 \left(\dfrac{c}{d}\right)^{(m-n)}\parallel \mathcal{A}_{m}^{n}x\parallel.$$
 If we consider $y=\mathcal{A}_{n}^{p}x$ we obtain
 $$\parallel \mathcal{A}_{n}^{p}y\parallel\leq D\left(\dfrac{c}{d}\right)^{(m-n)}\parallel
 \mathcal{A}_{m}^{p}y\parallel,$$
for all $(m,n,p,y)\in T\times X,$
 which  proves that equation (\ref{A}) is
 p.is.
\end{proof}
\begin{prop}\label{theorem3}
The linear discrete-time system (\ref{A}) is strongly power
instable if and only if there exist some constants $D \geq 1,$ $d
>1$ and $c \geq 1$ with $1\leq 2c < d$ such that:
\begin{equation}\label{t2eqii}
\sum\limits_{k=n}^{m} d^{m-k}\parallel \mathcal{A}_{k}^{n}
x\parallel \leq D c^{m}
\parallel \mathcal{A}_{m}^{n}x\parallel,\;\; \text{for all}\;\; (m,n,x)\in \Delta \times X.
\end{equation}
\end{prop}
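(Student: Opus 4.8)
The plan is to follow the proof of Theorem~\ref{theorem2} almost verbatim, tracking only how the sharper relation between the constants propagates; the two moving parts --- geometric summation in the direction ``s.p.is.\ $\Rightarrow$ (\ref{t2eqii})'' and extraction of the $k=n$ term followed by the cocycle substitution $y=\mathcal{A}_n^p x$ in the converse direction --- are literally the same, so I will only point to the two spots where the \emph{strong} hypothesis does any work.

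\emph{Necessity.} Suppose (\ref{A}) is s.p.is., with $N\geq 1$, $r\in(0,1)$, $s\in[1,1/r)$ as in Definition~\ref{def2}, so that $\parallel\mathcal{A}_k^n x\parallel\leq N r^{m-k}s^{k}\parallel\mathcal{A}_m^n x\parallel$ for all $(m,k,n,x)\in T\times X$. Substituting this into the left side of (\ref{t2eqii}), pulling $\parallel\mathcal{A}_m^n x\parallel$ out of the sum and regrouping the exponentials gives $\sum_{k=n}^{m}d^{m-k}\parallel\mathcal{A}_k^n x\parallel\leq N(dr)^{m}\parallel\mathcal{A}_m^n x\parallel\sum_{k=n}^{m}\bigl(s/(dr)\bigr)^{k}$. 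I would then choose $d$ in the open interval $\bigl(s/r,\,1/r^{2}\bigr)$, which is nonempty \emph{precisely because} $s<1/r$: for such $d$ one has $s/(dr)<1$, so the geometric sum is dominated by the constant $K:=\bigl(1-s/(dr)\bigr)^{-1}$, and at the same time $(dr)^{2}<d$. Taking $c:=dr$ and $D:=NK$ then gives (\ref{t2eqii}) with $D\geq 1$, $d>1$, $c\geq 1$ and $c^{2}<d$, i.e.\ with $c$ small relative to $d$ in the required way. Thus the inequality $s<1/r$ that separates strong from ordinary power instability is exactly what keeps the admissible window for $d$ nonempty.

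\emph{Sufficiency.} Assume (\ref{t2eqii}). Exactly as in Theorem~\ref{theorem2}, retain only the $k=n$ term to get $d^{m-n}\parallel x\parallel\leq Dc^{m}\parallel\mathcal{A}_m^n x\parallel$, hence $\parallel x\parallel\leq Dc^{n}(c/d)^{m-n}\parallel\mathcal{A}_m^n x\parallel$; replacing $x$ by $\mathcal{A}_n^p x$ and using $\mathcal{A}_m^n\mathcal{A}_n^p=\mathcal{A}_m^p$ yields $\parallel\mathcal{A}_n^p x\parallel\leq Dc^{n}(c/d)^{m-n}\parallel\mathcal{A}_m^p x\parallel$ for all $(m,n,p,x)\in T\times X$. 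This is the inequality of Definition~\ref{def2} with $N:=D$, $r:=c/d$, $s:=c$; it only remains to check $r\in(0,1)$ --- immediate, since $c<d$ --- together with $s\in[1,1/r)$, i.e.\ $c\geq 1$ (given) and $c<d/c$, that is $c^{2}<d$, which is the relation linking $c$ and $d$ in the hypothesis.

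The step that needs the most care --- the only real difference from Theorem~\ref{theorem2} --- is this matching of the two constraints, namely the equivalence ``$c$ suitably small relative to $d$'' $\Longleftrightarrow$ ``$s<1/r$''. In the necessity direction this is what fixes the interval from which $d$ has to be chosen, and is the sole place the hypothesis $s<1/r$ is used; in the sufficiency direction it is what certifies that the pair $(r,s)=(c/d,\,c)$ one reads off meets the \emph{strong} requirement of Definition~\ref{def2}, not just the weaker one behind Theorem~\ref{theorem2}. Everything else is a word-for-word repetition of that earlier proof.
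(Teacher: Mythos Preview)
Your approach is exactly what the paper has in mind: its proof of this proposition is the single sentence ``It is similar with the proof of Theorem~\ref{theorem2},'' and you have carried out precisely that replay with the constants tracked. The structure (geometric summation for necessity, extracting the $k=n$ term and the cocycle substitution for sufficiency) matches perfectly.

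There is, however, a genuine slip in how you match the constants to the hypothesis. The proposition as stated asks for $1\le 2c<d$, whereas what your argument actually produces and uses is the relation $c^{2}<d$. These are \emph{not} the same: for $c=3$, $d=7$ one has $2c<d$ but $c^{2}>d$, and for $c=3/2$, $d=5/2$ one has $c^{2}<d$ but $2c>d$. Concretely:
\begin{itemize}
\item In the necessity direction you set $c:=dr$ and obtain $c^{2}=(dr)^{2}<d$ from $d<1/r^{2}$; but the stated condition $2c<d$ would require $2dr<d$, i.e.\ $r<1/2$, which is not guaranteed by the s.p.is.\ hypothesis.
\item In the sufficiency direction you need $s<1/r$ with $s=c$, $r=c/d$, i.e.\ $c^{2}<d$; the stated hypothesis $2c<d$ does not deliver this once $c>2$.
\end{itemize}
So when you write ``that is $c^{2}<d$, which is the relation linking $c$ and $d$ in the hypothesis,'' you have silently replaced the printed condition $2c<d$ by $c^{2}<d$. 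Your proof is correct and complete for the version with $c^{2}<d$ (and this is almost certainly what the analogy with Theorem~\ref{theorem2} is meant to yield, since there the condition is $1\le c<d$, i.e.\ $c^{1}<d$, and here the ``strong'' constraint $sr<1$ naturally upgrades this to $c^{2}<d$); but as a proof of the proposition exactly as stated it has a gap at this one point, and you should flag the discrepancy rather than paper over it.
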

\begin{proof}
It is similar with the proof of the Theorem \ref{theorem2}.
\end{proof}
\begin{cor}\label{theorem1}
The linear discrete-time system (\ref{A})  is uniformly power
instable if and only if there exist some constants $D \geq 1$ and
$d
>1$ such that:
\begin{equation}
\sum\limits_{k=n}^{m} d^{m-k}\parallel \mathcal{A}_{k}^{n}
x\parallel \leq D
\parallel \mathcal{A}_{m}^{n}x\parallel,\;\; \text{for all}\;\; (m,n,x)\in \Delta\times X.
\end{equation}
\end{cor}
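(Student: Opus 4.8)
The plan is to treat Corollary~\ref{theorem1} as the limiting case $c=1$ of Theorem~\ref{theorem2} (equivalently of Proposition~\ref{theorem3}), and to mimic the two halves of that proof with the uniform constant $s=1$ throughout. Since u.p.is.\ is exactly p.is.\ with $s=1$, one direction is essentially a specialization; the other requires checking that the argument in the sufficiency part of Theorem~\ref{theorem2} still closes when $c=1$.

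For the \emph{necessity}, I would start from Definition~\ref{def1}: there are $N\geq 1$ and $r\in(0,1)$ with $\parallel \mathcal{A}_{k}^{n}x\parallel \leq N r^{m-k}\parallel \mathcal{A}_{m}^{n}x\parallel$ for all $(m,k,n,x)\in T\times X$. Fix any $d>1/r$. Then
\begin{align*}
\sum_{k=n}^{m} d^{m-k}\parallel \mathcal{A}_{k}^{n}x\parallel
&\leq N\parallel \mathcal{A}_{m}^{n}x\parallel \sum_{k=n}^{m} (dr)^{m-k}\\
&= N\parallel \mathcal{A}_{m}^{n}x\parallel \sum_{j=0}^{m-n} \left(\dfrac{1}{dr}\right)^{j}
\leq N\parallel \mathcal{A}_{m}^{n}x\parallel \dfrac{dr}{dr-1},
\end{align*}
so with $D:=\dfrac{Ndr}{dr-1}\geq 1$ the inequality of the corollary holds. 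This is exactly the computation in Theorem~\ref{theorem2} with $s=1$, except that the geometric sum in $k$ must be summed as a convergent geometric series in the exponent $m-k$ (since $dr>1$) rather than absorbed into a power of $m$; I would make sure to present that bound cleanly.

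For the \emph{sufficiency}, assume the displayed inequality with constants $D\geq 1$, $d>1$. Taking the $k=n$ term on the left gives $d^{m-n}\parallel x\parallel \leq D\parallel \mathcal{A}_{m}^{n}x\parallel$, i.e.\ $\parallel x\parallel \leq D\,d^{-(m-n)}\parallel \mathcal{A}_{m}^{n}x\parallel$ for all $(m,n,x)\in\Delta\times X$. Substituting $y=\mathcal{A}_{n}^{p}x$ and using the cocycle identity $\mathcal{A}_{m}^{n}\mathcal{A}_{n}^{p}=\mathcal{A}_{m}^{p}$ (which holds for $(m,n,p)\in T$ from~(\ref{eqAmn})) yields $\parallel \mathcal{A}_{n}^{p}y\parallel \leq D\,d^{-(m-n)}\parallel \mathcal{A}_{m}^{p}y\parallel$ for all $(m,n,p,y)\in T\times X$. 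Setting $r:=1/d\in(0,1)$ and $N:=D\geq 1$ gives precisely Definition~\ref{def1}, so~(\ref{A}) is u.p.is. I do not anticipate a serious obstacle here; the only point deserving care is the bookkeeping in the necessity direction, namely verifying that $dr>1$ makes the geometric series converge to a constant independent of $m$ and $n$, which is what forces $c$ to disappear and distinguishes this case from the two proper nonuniform statements.
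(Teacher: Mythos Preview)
Your overall plan is the intended one: the paper states Corollary~\ref{theorem1} without proof, leaving it as the $c=1$ specialization of Theorem~\ref{theorem2}/Proposition~\ref{theorem3}, and your sufficiency argument is exactly the paper's sufficiency with $c=1$ and is correct.

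There is, however, a genuine slip in your necessity computation. From $\parallel \mathcal{A}_{k}^{n}x\parallel \leq N r^{m-k}\parallel \mathcal{A}_{m}^{n}x\parallel$ you correctly get
\[
\sum_{k=n}^{m} d^{m-k}\parallel \mathcal{A}_{k}^{n}x\parallel
\leq N\parallel \mathcal{A}_{m}^{n}x\parallel \sum_{k=n}^{m} (dr)^{m-k}.
\]
But with the substitution $j=m-k$ the right-hand sum is $\sum_{j=0}^{m-n}(dr)^{j}$, \emph{not} $\sum_{j=0}^{m-n}(1/(dr))^{j}$. If you pick $d>1/r$ as you do, then $dr>1$ and this partial sum is $\bigl((dr)^{m-n+1}-1\bigr)/(dr-1)$, which is unbounded in $m-n$; no uniform constant $D$ can absorb it. The fix is simply to choose $d$ in the interval $(1,1/r)$ (possible since $r\in(0,1)$). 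Then $dr\in(0,1)$ and
\[
\sum_{k=n}^{m}(dr)^{m-k}=\sum_{j=0}^{m-n}(dr)^{j}\leq \frac{1}{1-dr},
\]
so $D:=\dfrac{N}{1-dr}$ works, independently of $m,n$. Note that this is precisely the point where the uniform case departs from the paper's necessity argument for Theorem~\ref{theorem2}: there the choice $d>s/r$ produces the factor $(dr)^{m}$ that becomes the $c^{m}$ on the right, whereas here you must instead force the geometric ratio below $1$ so that no $c^{m}$ appears. Your closing remark anticipates exactly this issue, but the displayed computation contradicts it; once you reverse the inequality on $d$, the proof is complete.
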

\begin{rem}
Theorem \ref{theorem2}, Proposition  \ref{theorem3}  and Corollary
\ref{theorem1} are generalizations of Przyluski  type theorems for
the concepts of power instability, strongly power instability and
uniform power instability.
\end{rem}

\end{document}